\documentclass[12pt]{amsart}

\usepackage[top=16mm, bottom=16mm, left=44.45mm, right=44.45mm]{geometry}

\usepackage[pagewise]{lineno}\nolinenumbers

\usepackage{amsmath, amsfonts, amssymb, amsthm}
\usepackage{amsrefs}
\usepackage{mathrsfs}
\usepackage{enumitem} 
\usepackage{hyperref}
\usepackage{color}
\usepackage{dsfont}
\usepackage{csquotes}
\usepackage{epigraph}
\usepackage{comment}
\usepackage{mlmodern}
\usepackage[T1]{fontenc}

\usepackage{color}
\definecolor{darkblue}{rgb}{0.0,0.0,0.3}
\hypersetup{
    colorlinks=false,
    linkcolor=blue,
    urlcolor=darkblue,
    }

\newtheorem{theorem}{Theorem}[section]

\newtheorem{lemma}[theorem]{Lemma}

\theoremstyle{definition}

\newtheorem{definition}[theorem]{Definition}

\newtheorem{notation}[theorem]{Notation}

\theoremstyle{remark}
\newtheorem{remark}[theorem]{Remark}

\numberwithin{equation}{section}

\newcommand{\bR}{\mathbb{R}}

\newcommand{\Ampere}{Amp\`{e}re}

\newcommand{\Garding}{G\r{a}rding}

\DeclareMathOperator{\diam}{diam}

\DeclareMathOperator{\const}{const}

\title[Rigidity of entire solutions]{A Liouville type rigidity result for entire solutions of the $k$-Hessian equation}
\author{Bin Wang}
\address[]{Institute for Theoretical Sciences, Westlake University, Hangzhou, China} 
\thanks{This work was supported by the China Postdoctoral Science Foundation under Grant Number 2026M793406}
\email{wangbin@westlake.edu.cn}
\subjclass[2020]{Primary: 35B08, 35B53; Secondary: 35J60, 35J96}
\keywords{Hessian equations, rigidity of entire solutions, the Liouville theorem, fully nonlinear elliptic equations, quadratic growth}

\begin{document}
\begin{abstract}
In this note, we prove a Liouville type rigidity result for smooth entire solutions of the $k$-Hessian equation. The proof makes use of a concavity inequality for the $k$-Hessian operator.

\end{abstract}
\maketitle
\setcounter{tocdepth}{1} 


\section{Introduction}
In this note, we study the rigidity of smooth entire solutions to the following fully nonlinear, non-uniformly elliptic equation,
\begin{equation}
F(D^2u):=\sigma_k(\lambda(D^2u))=1 \quad \text{in $\bR^n$}, \label{the equation}
\end{equation} where $\lambda(D^2u)=(\lambda_1,\ldots,\lambda_n)$ are the eigenvalues of the Hessian $D^2u$ and 
\[\sigma_k(\lambda):=\sum_{1 \leq i_1<\cdots<i_k \leq n} \lambda_{i_1}\cdots \lambda_{i_k}\] is the $k$-th elementary symmetric polynomial. Following Wang \cite{Wang}, we call it \textit{the $k$-Hessian equation}. 

The $k$-Hessian equation has been extensively studied since the seminal work \cite{CNS} of Caffarelli, Nirenberg, and Spruck. The $\sigma_k$ operator has a rich structure and interpolates between
\begin{gather*}
\text{the Laplacian }\sigma_1\bigl(\lambda(D^2u)\bigr)
  = \sum_{i=1}^{n}\lambda_i
  = \Delta u, \\[4pt]
\text{and the Monge--Ampère operator }\sigma_n\bigl(\lambda(D^2u)\bigr)
  = \prod_{i=1}^{n}\lambda_i
  = \det(D^2u).
\end{gather*} The $\sigma_2=1$ equation and the quotient equation $\sigma_3/\sigma_1=1$ are also equivalent to the special Lagrangian equation
\[\sum_{i=1}^{n} \arctan \lambda_i(D^2u)=\Theta\] in low dimensions for some particular values of the ``phase'' $\Theta$. Moreover, there are several geometric problems that can be reduced to solving some $\sigma_k(\lambda(A[u]))$ equations with $\lambda(A[u])$ being eigenvalues of more complicated matrices $A[u]$.

The $k$-Hessian operator is elliptic at $k$-admissible functions, where the ellipticity is interpreted in the sense that 
\[\left\{\frac{\partial F}{\partial u_{ij}}\right\} \quad \text{is positive definite}.\]

\begin{definition}
Let $\Omega$ be an open connected subset of $\bR^n$. We say a function $u \in C^2(\Omega) \cap C(\overline{\Omega})$ is $k$-admissible in $\Omega$ if 
\[\lambda(D^2u) \in \Gamma_k:=\{\lambda \in \bR^n: \sigma_j(\lambda)>0 \quad \forall\ 1 \leq j \leq k\}\]  for all $x \in \Omega$.
\end{definition}

It is known \cite{Warren} that, when $2k \leq n+1$, not every smooth entire $k$-admissible solution of the $k$-Hessian equation \eqref{the equation} is a polynomial. In other words, the Bernstein type rigidity does not hold in general. In this note, we prove the following Liouville type rigidity result for the $k$-Hessian equation, when $2k \geq n+1$.

\begin{theorem} \label{the theorem}
Let $n \geq 3$ and $2 \leq k \leq n-1$ be such that $2k>n$. Suppose $u \in C^{\infty}(\bR^n)$ is a smooth function such that
\begin{equation}
\sigma_{k}\bigl(\lambda(D^2u)\bigr)=1 \quad \text{and} \quad \lambda(D^2u) \in \Gamma_k \quad \text{in $\bR^n$}.\label{basic assumption}
\end{equation} If there exist some constants $b,B>0$ such that
\begin{equation}
u(x) \geq b|x|^2-B \quad \text{for all $x \in \bR^n$},\label{quadratic growth}
\end{equation} then $u$ must be a quadratic polynomial. 
\end{theorem}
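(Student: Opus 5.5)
The plan is the standard Liouville strategy for fully nonlinear equations: first obtain a uniform bound on $D^2u$ on all of $\bR^n$, then invoke Evans--Krylov to conclude that $u$ is quadratic. Everything reduces to proving $|D^2u|\le C$ globally, with $C$ independent of the point.

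\textbf{Rescaling.} For $R>0$ set
\[
u_R(y)=R^{-2}u(Ry).
\]
Then $u_R$ solves \eqref{basic assumption} in $\bR^n$, and \eqref{quadratic growth} becomes $u_R(y)\ge b|y|^2-BR^{-2}$. Subtract an affine function so that $u$ attains its minimum at $0$ (after a translation); this uses the growth condition together with convexity-type information from $\Gamma_k$ (subharmonicity). One also needs an upper bound $u_R\le C$ on $B_2$. The intended route is a comparison argument on sublevel sets: the quadratic lower bound makes the sections $\{u<t\}$ bounded with size comparable to $\sqrt t$. Comparing $u$ with the radial solution $c|x|^2$ on a ball, where $\sigma_k(2c\,\mathrm{Id})=1$, should control the oscillation of $u$ across sections and give $\sup_{\{u<t\}}|Du|\lesssim\sqrt t$. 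After rescaling this yields uniform $C^1$ bounds for $u_R$ on the section $\{u_R<1\}$, which contains a fixed ball and is contained in another fixed ball, and on which $u_R-1=0$ on the boundary.

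\textbf{Pogorelov-type interior estimate.} This is the main obstacle. On the section $\Omega=\{u_R<1\}$ consider
\[
w=(1-u_R)^{\beta}\,\log\lambda_{\max}(D^2u_R)+\tfrac{a}{2}|Du_R|^2
\]
and apply the maximum principle with the linearized operator $F^{ij}\partial_{ij}$. The key ingredient is the concavity inequality for $\sigma_k$ mentioned in the abstract, of the type
\[
-F^{ij,pq}u_{ij1}u_{pq1}\ \ge\ (1-\epsilon)\sum_{i\ne1}\frac{F^{ii}u_{11i}^2}{\lambda_1}-C\,\frac{F^{11}u_{111}^2}{\lambda_1},
\]
valid when $\lambda_1$ is large. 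This inequality is what absorbs the third-derivative terms, and it is exactly where the hypothesis $2k>n$ should enter. In that range every $\lambda\in\Gamma_k$ has at most $n-k<k$ negative entries, so the smallest eigenvalue satisfies $|\lambda_{\min}|\lesssim\lambda_1/(\ldots)$. This in turn gives
\[
F^{11}\lambda_1^2\gtrsim \lambda_1 \quad\text{and}\quad \sum_i F^{ii}\gtrsim 1,
\]
which handles the terms coming from the cutoff $(1-u_R)^\beta$. I expect the delicate step to be the bad third-order terms in the $e_1$ direction combined with the cutoff gradient. To handle them I would take $\beta$ large and follow the Chou--Wang/Li--Ren--Wang Pogorelov argument for $\sigma_k$. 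The outcome is $D^2u_R\le C$ on $\{u_R<1/2\}\supset B_{c}$, with $C$ depending only on $n,k,b,B$.

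\textbf{Conclusion.} Scaling back gives $|D^2u|\le C$ on $B_{cR}$ for every $R$, hence $|D^2u|\le C$ on all of $\bR^n$. With the Hessian bounded, $\lambda(D^2u)$ lies in a compact subset of $\Gamma_k$: the equation $\sigma_k=1$ together with boundedness keeps the eigenvalues away from $\partial\Gamma_k$. So the equation is uniformly elliptic there, and $\sigma_k^{1/k}$ is concave. Evans--Krylov then gives
\[
[D^2u_R]_{C^\alpha(B_{c/2})}\le C.
\]
Scaling back,
\[
[D^2u]_{C^\alpha(B_{cR/2})}\le CR^{-\alpha}\to0,
\]
so $D^2u$ is constant and $u$ is a quadratic polynomial.
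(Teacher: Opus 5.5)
There is a genuine gap, and it sits where the difficulty of the theorem lies. Your Pogorelov step uses $(1-u_R)^\beta\log\lambda_{\max}+\frac{a}{2}|Du_R|^2$, so its constants depend on $\sup_{\{u_R<1\}}|Du_R|$. You plan to supply that bound, together with $u_R\le C$ on $B_2$, by comparing $u$ on sublevel sets with a paraboloid $P$ satisfying $\sigma_k(D^2P)=1$. That comparison cannot deliver it. Since both functions solve $\sigma_k=1$, on a section $S=\{u<t\}$ it only gives one-sided information such as $u\ge t-C(n,k)(\diam S)^2$ in $S$, i.e.\ a lower bound on the size of sections. It gives neither an inner ball of radius $\sim\sqrt t$, nor $u\le C(1+|x|^2)$, nor any control of $|Du|$ near $\partial S$. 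For $k<n$ the sections need not be convex, so the convexity/John-ellipsoid arguments of the Monge--Amp\`ere case are unavailable. Without an a priori upper growth bound, interior gradient estimates also cannot be applied uniformly after rescaling. Note further that if such a uniform gradient bound were available, the classical Chou--Wang Pogorelov estimate (which may depend on $\|u\|_{C^1}$) would essentially give the theorem for every $k$. That is essentially the route of Chu--Dinew, who first derive a gradient estimate from the extra hypothesis $\sigma_{k+1}\ge -A$. In your scheme the assumption $2k>n$ does no real work: the facts $F^{11}\lambda_1^2\gtrsim\lambda_1$ and $\sum_i F^{ii}\gtrsim 1$ hold for all $k$. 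This shows the load-bearing step has been hidden in the unproved gradient bound.

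The missing idea is a Pogorelov estimate with no gradient dependence at all, $\sup_\Omega(-u)^\beta|D^2u|\le C(n,k,\diam\Omega)$. The paper obtains it from $Q=(-u)^\beta\lambda_{\max}e^{|x|^2/2}$, and $2k>n$ enters in two places.
\begin{itemize}
\item The factor $e^{|x|^2/2}$ produces the good term $\sum_i F^{ii}$.
\item The bad term $\beta\sum_i F^{ii}u_i^2/u^2$ is rewritten via the first-order condition $\beta u_i/u=-(u_{11i}/\lambda_1+x_i)$. It is then bounded by $\varepsilon\sum_i F^{ii}u_{11i}^2/\lambda_1^2+\varepsilon\sum_i F^{ii}$, with $\varepsilon=C/\beta$.
\item The resulting $(1+\varepsilon)F^{11}u_{111}^2/\lambda_1^2$ is absorbed by a good term $\gamma F^{11}u_{111}^2/\lambda_1^2$ from Yan's concavity inequality, using $\sum_i F^{ii}u_{ii1}=0$. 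This needs $\gamma>1$, which is admissible precisely when $2k>n$. An inequality carrying $-C\,F^{11}u_{111}^2/\lambda_1$ on the right, as in your sketch, cannot do this without a gradient term.
\item For $i>m$, the coefficient $\frac{(1-\varepsilon)\lambda_1+(1+\varepsilon)\lambda_i}{\lambda_1-\lambda_i}$ of $F^{ii}u_{11i}^2/\lambda_1^2$ is nonnegative. This uses $\lambda_i>-\frac{n-k}{k}\lambda_1$ together with $\frac{n-k}{k}<1$, again exactly $2k>n$.
\item Finally, $\sum_i F^{ii}\ge c\,\lambda_1^{1/(k-1)}$ closes the estimate.
\end{itemize}
With this estimate, the blow-down needs no upper bound on $u$. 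Take the minimum point, where $Du_R=0$ and $u_R=O(R^{-2})$. The Hessian bound on $\{u_R<1/2\}$ plus a Taylor expansion there forces this set to contain a fixed ball. Your concluding Evans--Krylov scaling argument then applies.
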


\begin{remark}
Note that our theorem holds for $2k \geq n+1$.
In particular, reflecting on the dimensional range $2k \leq n+1$ for the existence of non-polynomial solutions, it seems necessary to impose an additional hypothesis like \eqref{quadratic growth} beyond the basic one \eqref{basic assumption}, at least for the borderline case $2k=n+1$.
\end{remark}
For the proof of Theorem \ref{the theorem}, it suffices to establish a Pogorelov type interior estimate,
\begin{equation}
\sup_{\Omega}\ (-u)^{\beta}|D^2u| \leq C, \label{the Pogorelov estimate}
\end{equation} for $k$-admissible solutions $u$ to the following Dirichlet problem:
\[\begin{alignedat}{2}
\sigma_{k}\bigl(\lambda(D^2u)\bigr)&=1 &\quad & \text{in a bounded domain $\Omega \subseteq \bR^n$},\\
u&=0 &\quad & \text{on $\partial \Omega$}.\\
\end{alignedat}\] A crucial thing to note is that the constants $\beta,C>0$ in \eqref{the Pogorelov estimate} cannot depend on $\sup_{\Omega} |Du|$. In other words, we are not allowed to add the usual gradient term
\begin{equation}
\frac{1}{2}|Du|^2 \label{the gradient term}
\end{equation} to the auxiliary function and this is the major difficulty for obtaining the estimate \eqref{the Pogorelov estimate}.

In order to obtain \eqref{the Pogorelov estimate}, we will make use of a concavity inequality for the $\sigma_k$ operator that was conjectured by Ren-Wang \cite{Ren-Wang-2} and has recently been verified by Yan \cite{Yan}. Once we have the required estimate \eqref{the Pogorelov estimate} for some $C>0$ that is independent of $\sup_{\Omega}|Du|$, the proof of Theorem \ref{the theorem} would be rather routine; see \cite{Li-Ren-Wang, Chen-Xiang, Chu-Dinew, Tu, Zhang}.

We now review some related literature. Let us first discuss the ``end-point'' cases $k=1$ and $k=n$ for which Theorem \ref{the theorem} holds without the quadratic growth condition \eqref{quadratic growth}. Indeed, by the classical Liouville theorem for harmonic functions, one can deduce that every smooth entire solution of $\Delta u =1$ that is semi-convex in the sense that $D^2u \geq -KI$ for some constant $K>0$ must be a quadratic polynomial. Similarly, for the Monge-\Ampere\ equation, every smooth entire solution of $\det(D^2u)=1$ with $D^2u>0$ must be a quadratic polynomial. This is a classical result proved by J\"{o}rgens \cite{Jorgens} in dimension $n=2$, by Calabi \cite{Calabi} in dimensions $3 \leq n \leq 5$, and by Pogorelov \cite{Pogorelov} in all dimensions; see also Cheng-Yau \cite{Cheng-Yau} for a geometric proof and Caffarelli-Li \cite{Caffarelli-Li} for an extension to viscosity solutions.

For $k=2$, Chen and Xiang \cite{Chen-Xiang} proved the estimate \eqref{the Pogorelov estimate} in dimension $n=3$, by employing a concavity inequality due to Qiu \cite{Qiu}. In the same paper, by using a concavity inequality due to Guan-Qiu \cite{Guan-Qiu}, they also proved that the estimate \eqref{the Pogorelov estimate} holds for $\sigma_2$ in higher dimensions if $u$ additionally satisfies the condition $\sigma_{3}(\lambda(D^2u)) \geq -A$ for some constant $A>0$. Consequently, Theorem \ref{the theorem} follows in the corresponding cases.

In \cite{Tu}, Tu deduced a concavity inequality for the $\sigma_{n-1}$ operator building on the work of Lu-Tsai \cite{LT-PAMS}, Ren-Wang \cite{Ren-Wang-1}, and Zhang \cite{Zhang}. As a consequence, Tu was able to prove the estimate \eqref{the Pogorelov estimate} and so Theorem \ref{the theorem} follows for $k=n-1$, where $n \geq 3$. In particular, Tu's result recovers Chen-Xiang's result \cite[Theorem 1.4]{Chen-Xiang} for $\sigma_2$ in dimension $n=3$.

For the intermediate cases $3 \leq k \leq n-2$, Bao-Chen-Guan-Ji \cite{BCGJ} proved Theorem \ref{the theorem} for solutions $u$ with $\lambda(D^2u) \in \Gamma_n$, which are solutions with $D^2u>0$. In \cite{Li-Ren-Wang}, Li-Ren-Wang improved the result to solutions $u$ with $\lambda(D^2u) \in \Gamma_{k+1}$. Chu and Dinew \cite{Chu-Dinew} further relaxed the condition to $\lambda(D^2u) \in \Gamma_k$ and $\sigma_{k+1}(\lambda(D^2u)) \geq -A$ for some constant $A>0$. In particular, Chu-Dinew's result generalizes Chen-Xiang's result \cite[Theorem 1.3]{Chen-Xiang} for $\sigma_2$ with $\sigma_3 \geq -A$.
Recently, Zhang \cite{Zhang} proved the result for solutions with $\lambda(D^2u) \in \Gamma_k$ and $D^2u \geq -KI$ for some constant $K>0$.

\begin{remark}
Our Theorem \ref{the theorem} removes Zhang's \cite{Zhang} semi-convexity condition when $2k>n$ and generalizes Tu's result \cite{Tu} from $k=n-1$ to $2k>n$. In particular, our result is new for $k=n-2$, $n \geq 5$.
\end{remark}
\begin{remark}
It might be worth recalling the following inclusion relations,
\begin{equation}
\Gamma_n \subseteq \cdots \subseteq \Gamma_{k+1} \subseteq \Gamma_{k} \subseteq \cdots \subseteq \Gamma_1. \label{inclusion relation}
\end{equation} In particular,
\[\Gamma_n=\{\lambda \in \bR^n: \lambda_i>0 \quad \forall\ 1 \leq i \leq n\}\] and so
\[\{u: \lambda(D^2u) \in \Gamma_n\}=\{u: D^2u>0\}.\]
Moreover, on the set
\[\Gamma_k \cap \{\lambda: \sigma_k(\lambda)=1\},\] the condition $\sigma_{k+1}(\lambda) \geq -A$ for some constant $A>0$ implies that there is some constant $K=K(n,k,A,\sigma_k)>0$ such that $\lambda_i \geq -K$ for all $1 \leq i \leq n$; see \cite[Lemma 2.2]{Zhang}. 
\end{remark}

\begin{remark}
We shall also comment on the proofs for the intermediate cases $3 \leq k \leq n-2$. Bao-Chen-Guan-Ji \cite{BCGJ} made use of the Legendre transform; Li-Ren-Wang \cite{Li-Ren-Wang} devised a novel choice of the auxiliary function which enabled them to get the estimate \eqref{the Pogorelov estimate} for $(k+1)$-admissible solutions; Chu-Dinew \cite{Chu-Dinew} used the condition $\sigma_{k+1} \geq -A$ to derive a Pogorelov type interior gradient estimate for admissible solutions so that they could add a gradient dependence \eqref{the gradient term} to their auxiliary function which would enable them to get the estimate \eqref{the Pogorelov estimate}. Zhang \cite{Zhang} derived a strengthened version of Lu's \cite{Lu-CVPDE} concavity inequality under the condition that $D^2u \geq -KI$; the inequality would then lead to the estimate \eqref{the Pogorelov estimate}. The ideas of Lu's weaker inequality and Li-Ren-Wang's novel auxiliary function both come from the work \cite{Guan-Ren-Wang} of Guan-Ren-Wang.
\end{remark}

For the $\sigma_2$ equation, there is another Liouville type rigidity result due to Shankar-Yuan \cite{SY-Duke}: Every smooth entire solution of $\sigma_2(D^2u)=1$ that is semi-convex must be a quadratic polynomial. Note that this result assumes the semi-convexity condition instead of the quadratic growth condition \eqref{quadratic growth}. In \cite{CY, BCGJ}, it was conjectured that this result should remain valid for the $\sigma_k$ equation as well. However, at the time of writing this manuscript, the conjecture has not been verified.

Finally, we conclude by mentioning rigidity results for some related equations. In a recent preprint \cite{Lu-Sroka}, Lu and Sroka made a keen observation that solutions to the quotient equation $\sigma_2/\sigma_1=1$ can be transformed into solutions of the $\sigma_2$ equation. Therefore, by applying the rigidity result of Shankar-Yuan \cite{SY-Duke}, it follows that every smooth entire solution of $\sigma_2/\sigma_1=1$ that is semi-convex must be a quadratic polynomial. In \cite{Li-Wu}, Li and Wu proved Theorem \ref{the theorem} for smooth entire solutions with $\lambda(D^2u) \in \Gamma_n$ to the quotient $\sigma_k/\sigma_l$ under the same quadratic growth condition \eqref{quadratic growth}, where $l=k-1,k-2$. Their proof is through a combination of the constant rank theorem, interior Hessian estimates, and the Legendre transform.

In \cite{Liu-Ren}, Liu and Ren proved Theorem \ref{the theorem} for $k$-admissible solutions to the sum Hessian equation $\sigma_k+\alpha\sigma_{k-1}=1$. Their result holds for all $k$ and the parameter $\alpha$ being positive played a crucial role in their proof. It parallels Li-Ren-Wang's result \cite{Li-Ren-Wang} for $(k+1)$-admissible solutions of $\sigma_k=1$ because the natural ellipticity cone for the sum Hessian equation is $\Gamma_{k-1}$.
In \cite{Mei-Yan}, by modifying the argument of Chang-Yuan \cite{CY}, Mei and Yan proved that every smooth entire solution of the sum Hessian equation $\sigma_3+\sigma_2=\const$ with $\lambda(D^2u) \in \Gamma_n$ must be a quadratic polynomial, without imposing \eqref{quadratic growth}. 

The rest of this note is organized as follows. In Section \ref{Preliminaries}, we introduce some notations and state some properties of the $\sigma_k$ operator. In Section \ref{the theorem proof}, we derive the required estimate \eqref{the Pogorelov estimate}. The proof of Theorem \ref{the theorem} would then follow as a consequence.

\section{Preliminaries} \label{Preliminaries}

For a symmetric matrix $A=(a_{ij})$ and a function
\[F: \{\text{symmetric matrices}\} \to \bR,\] we define
\begin{equation}
F^{ij}(A)=\frac{\partial F}{\partial a_{ij}}, \quad F^{ij,rs}(A)=\frac{\partial^2 F}{\partial a_{ij} \partial a_{rs}}.\label{derivatives of F}
\end{equation}
When $F$ is of the form
\[F(A)=f(\lambda(A))\] for some symmetric function $f$ of $n$ variables, where $\lambda(A)=(\lambda_1,\ldots,\lambda_n)$ are the eigenvalues of $A$, the function $F$ is as smooth as $f$ and is concave if $f$ is concave. When $A$ is diagonal, we have $F^{ij}=f_i\delta_{ij}$ where
\begin{equation}
f_i=\frac{\partial f}{\partial \lambda_i}.\label{derivatives of f}
\end{equation} Moreover, we have
\[\sum_{i,j} F^{ij}a_{ij}=\sum_{i=1}^{n} f_i(\lambda(A))\lambda_i, \quad \sum_{i,j,k} F^{ij}a_{ik}a_{jk}=\sum_{i=1}^{n} f_i(\lambda(A))\lambda_{i}^2.\]

Recall the definition of the $k$-th elementary symmetric polynomial,
\[\sigma_k(\lambda_1,\ldots,\lambda_n):=\sum_{1 \leq i_1<\cdots<i_k \leq n} \lambda_{i_1}\cdots \lambda_{i_k},\] and we adopt the convention that $\sigma_0:=1$ and $\sigma_k:=0$ for $k>n$. The associated $k$-th \Garding\ cone is an open symmetric convex cone defined by 
\[\Gamma_k:=\{\lambda \in \bR^n: \sigma_j(\lambda)>0 \quad \forall\ 1 \leq j \leq k\}.\]

\begin{notation}
Observe that
\[\frac{\partial}{\partial \lambda_i}\sigma_k(\lambda)=\sigma_{k-1}(\lambda)\bigg|_{\lambda_i=0}=\sigma_{k-1}(\lambda_1,\ldots,\lambda_{i-1},0,\lambda_{i+1},\ldots,\lambda_n).\] Therefore, we may use $\sigma_{k-1}(\lambda|i)$ to denote the first order derivatives of $\sigma_k(\lambda)$. The notation $\sigma_{k-2}(\lambda|ij)$ is defined in the same fashion for the second order derivatives. 
\end{notation}

The following are some commonly used properties of the $\sigma_k$ operator and we may state them without proofs.
\begin{lemma} \label{sigma_k properties 1}
For all $1 \leq k \leq n$ and $\lambda \in \bR^n$, we have
\begin{align*}
&\ \sigma_k(\lambda)=\lambda_i\sigma_{k-1}(\lambda|i)+\sigma_{k}(\lambda|i) \quad \text{for any $1 \leq i \leq n$},\\
&\ \sum_{i=1}^{n}\lambda_i\sigma_{k-1}(\lambda|i)=k\sigma_k(\lambda), \\
&\ \sum_{i=1}^{n} \sigma_{k-1}(\lambda|i)=(n-k+1)\sigma_{k-1}(\lambda).
\end{align*}
\end{lemma}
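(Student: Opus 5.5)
The three identities in Lemma \ref{sigma_k properties 1} are purely algebraic facts about elementary symmetric polynomials. I would prove them by sorting the monomials of $\sigma_k(\lambda)$ according to whether they contain a particular variable $\lambda_i$, and then by counting how often each monomial appears in the sums on the left-hand sides.

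For the first identity, fix $i$ and split the index sets $\{i_1<\cdots<i_k\}$ into those containing $i$ and those not containing $i$. The index sets not containing $i$ give exactly $\sigma_k(\lambda|i)$. Every monomial from an index set containing $i$ has the form $\lambda_i$ times a product of $k-1$ distinct variables other than $\lambda_i$. Summing these gives $\lambda_i\sigma_{k-1}(\lambda|i)$. This is consistent with the Notation, since $\sigma_{k-1}(\lambda|i)=\partial\sigma_k/\partial\lambda_i$, and the first identity is just the statement that $\sigma_k$ is affine in each variable separately.

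For the second identity, expand $\sum_i\lambda_i\sigma_{k-1}(\lambda|i)$. Each term is a monomial $\lambda_{i_1}\cdots\lambda_{i_k}$ with distinct indices, obtained by picking an index $i$ and a $(k-1)$-subset not containing $i$. A fixed $k$-subset arises exactly $k$ times, once for each choice of which of its elements plays the role of $i$. Hence the sum is $k\sigma_k(\lambda)$. Equivalently, this is Euler's identity for the homogeneous polynomial $\sigma_k$ of degree $k$, combined with $\partial_i\sigma_k=\sigma_{k-1}(\lambda|i)$.

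For the third identity, $\sum_i\sigma_{k-1}(\lambda|i)$ sums every $(k-1)$-subset monomial once for each index $i$ outside the subset. There are $n-(k-1)=n-k+1$ such indices, so the sum is $(n-k+1)\sigma_{k-1}(\lambda)$. The conventions $\sigma_0=1$ and $\sigma_j=0$ for $j>n$ make the edge cases $k=1$ and $k=n$ consistent; for instance, when $k=1$ the third identity reads $\sum_i 1=n$.

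There is no real obstacle here. The only care needed is the combinatorial bookkeeping of multiplicities and checking the boundary conventions for $k=1$ and $k=n$.
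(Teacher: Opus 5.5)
Your proof is correct. The paper gives no proof of this lemma; it lists these identities among standard properties of $\sigma_k$ that are stated without proof. Your monomial-counting argument, equivalently the affine-in-each-variable observation plus Euler's identity for the degree-$k$ homogeneous polynomial $\sigma_k$, is the standard verification, and your handling of $k=1$ and $k=n$ via $\sigma_0=1$ and $\sigma_j=0$ for $j>n$ is accurate.
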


\begin{lemma} \label{sigma_k properties 2}
Let $\lambda \in \Gamma_k$ be ordered as 
\[\lambda_1 \geq \cdots \geq \lambda_n.\] Then $\lambda_k>0$ and 
\[0<\frac{\partial\sigma_k}{\partial \lambda_1} (\lambda) \leq \cdots \leq \frac{\partial \sigma_k}{\partial \lambda_n}(\lambda).\] If $\lambda_i \leq 0$, then
\[\lambda_i>-\frac{n-k}{k} \lambda_1.\]
\end{lemma}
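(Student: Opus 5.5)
The plan is to prove the three assertions of Lemma \ref{sigma_k properties 2} in a convenient order. The single structural input is the classical fact, going back to \Garding\ and Caffarelli--Nirenberg--Spruck, that for $\lambda \in \Gamma_k$ and any index $i$, the vector $(\lambda|i) \in \bR^{n-1}$ lies in $\Gamma_{k-1}$; iterating, $(\lambda|ij) \in \Gamma_{k-2}$ for $i \neq j$. I would quote this standard fact rather than reprove it. It immediately gives $\frac{\partial \sigma_k}{\partial\lambda_i}(\lambda)=\sigma_{k-1}(\lambda|i)>0$.

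\emph{Monotonicity of the derivatives.} Apply the first identity of Lemma \ref{sigma_k properties 1} to $\sigma_{k-1}(\lambda|i)$ and to $\sigma_{k-1}(\lambda|j)$, in each case splitting off the other index. Subtracting gives
\[\sigma_{k-1}(\lambda|i)-\sigma_{k-1}(\lambda|j)=(\lambda_j-\lambda_i)\,\sigma_{k-2}(\lambda|ij).\]
Here $\sigma_{k-2}(\lambda|ij)>0$, since $(\lambda|ij)\in\Gamma_{k-2}$, or because $\sigma_0=1$ when $k=2$. So for $i<j$, where $\lambda_i \ge \lambda_j$, the difference is $\le 0$. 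This is the claimed ordering.

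\emph{$\lambda_k>0$.} Suppose $\lambda_n\le 0$. From $\sigma_k(\lambda)=\lambda_n\sigma_{k-1}(\lambda|n)+\sigma_k(\lambda|n)>0$ and $\sigma_{k-1}(\lambda|n)>0$ we get $\sigma_k(\lambda|n)>0$. Together with $(\lambda|n)\in\Gamma_{k-1}$, this shows $(\lambda|n)\in\Gamma_k$ as a vector in $\bR^{n-1}$. Now assume for contradiction that $\lambda_k\le 0$. Then $\lambda_k,\dots,\lambda_n\le0$, and we can successively delete $\lambda_n,\lambda_{n-1},\dots,\lambda_k$ while staying in $\Gamma_k$ at each step. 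We end with $(\lambda_1,\dots,\lambda_{k-1})\in\Gamma_k$ in $\bR^{k-1}$. This is impossible, because $\sigma_k$ vanishes identically in $k-1$ variables.

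\emph{Lower bound on negative entries.} This is the step needing the most care. Fix $i$ with $\lambda_i\le 0$ and set $\mu=(\lambda|i)\in\Gamma_{k-1}\subset\bR^{n-1}$. Note that $\lambda_1>0$, since $\sigma_1(\lambda)>0$. From $0<\sigma_k(\lambda)=\lambda_i\sigma_{k-1}(\mu)+\sigma_k(\mu)$ we see that $\sigma_k(\mu)>0$. Newton's inequalities hold for arbitrary real vectors. Applied to the normalized quantities $\sigma_j(\mu)/\binom{n-1}{j}$ for $j\le k$, all of which are positive, they give a chain of ratios. This yields
\[\sigma_k(\mu)\le \frac{n-k}{k(n-1)}\,\sigma_1(\mu)\,\sigma_{k-1}(\mu)\le \frac{n-k}{k}\,\lambda_1\,\sigma_{k-1}(\mu),\]
where the last step uses $\sigma_1(\mu)\le (n-1)\lambda_1$. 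Substituting, we obtain $0<\sigma_{k-1}(\mu)\bigl(\lambda_i+\tfrac{n-k}{k}\lambda_1\bigr)$. Dividing by $\sigma_{k-1}(\mu)>0$ gives $\lambda_i>-\frac{n-k}{k}\lambda_1$.

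The main point to watch is that the ratio chain from Newton's inequality only needs positivity of $\sigma_1(\mu),\dots,\sigma_k(\mu)$. This positivity is exactly what $\mu\in\Gamma_{k-1}$ together with $\sigma_k(\mu)>0$ provides.
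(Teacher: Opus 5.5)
Your proof is correct. The paper does not prove this lemma: it lists it among the ``commonly used properties of the $\sigma_k$ operator'' stated without proof. So there is no argument in the paper to compare yours against. What you have written is a standard self-contained derivation, resting on the single quoted fact that $(\lambda|i)\in\Gamma_{k-1}$ whenever $\lambda\in\Gamma_k$.

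All three steps check out.
\begin{itemize}
\item Your identity $\sigma_{k-1}(\lambda|i)-\sigma_{k-1}(\lambda|j)=(\lambda_j-\lambda_i)\,\sigma_{k-2}(\lambda|ij)$ is exactly the relation the paper records in \eqref{2nd derivative for sigma k 2}.
\item In the deletion argument for $\lambda_k>0$, nonpositivity of each deleted entry keeps $\sigma_k$ of the truncated vector positive. This becomes impossible once only $k-1$ entries remain.
\item In the last step, the Newton ratio chain in $\bR^{n-1}$ produces exactly the constant $\frac{n-k}{k(n-1)}$. Combined with $\sigma_1(\mu)\le (n-1)\lambda_1$, it gives the stated bound.
\end{itemize}

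Two minor remarks. First, the chain $E_k/E_{k-1}\le\cdots\le E_1/E_0$ only requires $\sigma_0(\mu),\dots,\sigma_{k-1}(\mu)>0$, since you divide by $E_{j-1}E_j$ only for $j\le k-1$. So your observation that $\sigma_k(\mu)>0$ is not needed for Newton's inequalities. It is, however, exactly what lets you cite the paper's Lemma \ref{NM} instead, with $(k,l)=(k,k-1)$ and $(r,s)=(1,0)$ applied to $\mu\in\Gamma_k\subset\bR^{n-1}$. Second, for completeness you should mention two degenerate cases your formulas handle only implicitly:
\begin{itemize}
\item $k=1$ in the monotonicity claim, where all derivatives equal $1$;
\item $k=n$ in the last claim, which is vacuous because $\Gamma_n$ is the positive cone.
\end{itemize}
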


\begin{lemma} \label{NM}
Let $n \geq k>l \geq 0$ and $n \geq r>s \geq 0$. Suppose $\kappa \in \Gamma_k$. If $k \geq r$ and $l \geq s$, then
\[\left[\frac{\binom{n}{k}^{-1}\sigma_k(\kappa)}{\binom{n}{l}^{-1}\sigma_l(\kappa)}\right]^{\frac{1}{k-l}} \leq \left[\frac{\binom{n}{r}^{-1} \sigma_r(\kappa)}{\binom{n}{s}^{-1}\sigma_s(\kappa)}\right]^{\frac{1}{r-s}}.\]
\end{lemma}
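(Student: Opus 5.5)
We would prove Lemma \ref{NM} by reducing it to the classical Newton inequalities and then making an elementary monotonicity argument about geometric means.

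Set $E_j(\kappa):=\binom{n}{j}^{-1}\sigma_j(\kappa)$ for $0\le j\le n$, so $E_0=1$.

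\textbf{Step 1: Newton's inequalities.} We first recall that
\[E_{j-1}(\kappa)E_{j+1}(\kappa)\le E_j(\kappa)^2,\qquad 1\le j\le n-1,\]
for every $\kappa\in\bR^n$. This is classical. Form the polynomial $P(t)=\prod_{i}(t+\kappa_i)=\sum_j\binom{n}{j}E_j t^{n-j}$, which has only real roots. Differentiating repeatedly and applying Rolle's theorem (together with the reversal $t\mapsto 1/t$, which also preserves real-rootedness) reduces $P$ to a real-rooted quadratic $E_{j-1}t^2+2E_jt+E_{j+1}$ up to a positive factor. Nonnegativity of its discriminant is exactly the inequality above.

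\textbf{Step 2: monotone ratios.} Since $\kappa\in\Gamma_k$, we have $E_j(\kappa)>0$ for $0\le j\le k$. So the ratios $q_j:=E_j/E_{j-1}$ are well defined and positive for $1\le j\le k$. For $1\le j\le k-1$, Newton's inequality with $E_{j-1},E_j>0$ gives
\[q_{j+1}=\frac{E_{j+1}}{E_j}\le \frac{E_j}{E_{j-1}}=q_j,\]
so $q_1\ge q_2\ge\cdots\ge q_k>0$.

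\textbf{Step 3: telescoping.} For $0\le a<b\le k$,
\[\Bigl(\tfrac{E_b}{E_a}\Bigr)^{\frac1{b-a}}=\Bigl(\prod_{j=a+1}^{b}q_j\Bigr)^{\frac1{b-a}}=:G(a,b),\]
the geometric mean of $q_{a+1},\dots,q_b$. Because the sequence $(q_j)$ is nonincreasing, $G(a,b)$ is nonincreasing in $b$ for fixed $a$, since appending a term no larger than all previous ones cannot raise the geometric mean. By the same reasoning, $G(a,b)$ is nonincreasing in $a$ for fixed $b$, since deleting the largest term cannot raise it.

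\textbf{Step 4: comparison of the two intervals.} The hypotheses give $s\le l<k$, $s<r\le k$, and all indices are at most $k$. Moving the left endpoint and then the right endpoint, we obtain
\[G(l,k)\le G(s,k)\le G(s,r),\]
which is exactly the claimed inequality. The intermediate interval $[s+1,k]$ is nonempty because $s<r\le k$.

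The only substantive ingredient is Newton's inequality in Step 1, which is standard. The main point requiring care is that positivity of $E_0,\dots,E_k$ on $\Gamma_k$ is what allows dividing and taking roots. In particular, $r\le k$ is what guarantees that $\sigma_r(\kappa)>0$, so no sign issues arise.
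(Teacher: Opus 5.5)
Your proof is correct and complete. Newton's inequalities make the ratios $q_j=E_j/E_{j-1}$ positive and nonincreasing for $1\le j\le k$ when $\kappa\in\Gamma_k$, and the lemma then reduces to comparing the geometric means of $q_{l+1},\dots,q_k$ and $q_{s+1},\dots,q_r$, exactly as you do.

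The paper gives no proof of this lemma; it lists it among standard properties of $\sigma_k$ stated without proof, and your argument is the classical proof it relies on. The only step worth tightening is Step 1. There, work with the homogeneous polynomial $\prod_i(x+\kappa_i y)$ and take $j-1$ derivatives in $y$ and $n-j-1$ in $x$; this avoids the bookkeeping for zero roots in the reversal $t\mapsto 1/t$.
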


\begin{notation}
The $\sigma_k$ operator can be regarded as a function on the space of $n \times n$ symmetric matrices, by writing $\sigma_k(A)$ to mean $\sigma_k(\lambda(A))$. In particular, $\sigma_k(A)$ is the sum of the $k \times k$ principal minors of $A$.
With this abuse of notation, we may also write $\sigma_{k}^{ij}$ and $\sigma_{k}^{ij,kl}$ to mean the first and the second order derivatives in the sense of \eqref{derivatives of F}. When the matrix is diagonal, the notations $\sigma_{k}^{ii}$ and $\sigma_{k}^{ii,jj}$ can be used to denote the derivatives in the sense of \eqref{derivatives of f} without causing confusion. 
\end{notation}

When $A=(a_{ij})=(\lambda_i\delta_{ij})$ is diagonalized with eigenvalues $\lambda_1 \geq \cdots \geq \lambda_n$, we have
\[
\sigma_k^{pq}(A)
=
\frac{\partial \sigma_k}{\partial \lambda_p}(\lambda)\delta_{pq}
=
\sigma_{k-1}(\lambda\mid p)\delta_{pq},
\]

\begin{equation}
\sigma_k^{pq,rs}(A)
=
\begin{cases}
\displaystyle
\sigma_{k-2}(\lambda\mid pr),
&
p=q,\ r=s,\ p\neq r,
\\[10pt]
\displaystyle
-\sigma_{k-2}(\lambda\mid pq),
&
p=s,\ q=r,\ p\neq q,
\\[10pt]
0,
&
\text{otherwise}.
\end{cases} \label{2nd derivatives for sigma k}
\end{equation} Moreover, by Lemma \ref{sigma_k properties 1} and Lemma \ref{sigma_k properties 2}, one can verify that
\begin{equation}
\sigma_{k}^{pp,qq}=\frac{\sigma_{k}^{pp}-\sigma_{k}^{qq}}{\lambda_q-\lambda_p} \geq 0 \quad \text{if $\lambda_p \neq \lambda_q$}.\label{2nd derivative for sigma k 2}
\end{equation}

\begin{lemma} \label{the concavity inequality}
Let $n \geq 3$ and $2 \leq k \leq n-1$. Assume the vector $\lambda \in \Gamma_k$ is ordered with multiplicity $m \geq 2$ in the sense that
\[\lambda_1 = \cdots = \lambda_m > \lambda_{m+1} \geq \cdots \geq \lambda_n.\] For 
\begin{equation}
0<\gamma<\min \left\{\frac{2k}{n}, 1+ \frac{2k-n}{2k^2+n}\right\},\label{gamma}
\end{equation} there exists some $\eta_{*}=\eta_{*}(n,k,\gamma)>0$ such that if 
\[\lambda_1 \geq \frac{\sigma_{k}^{1/k}(\lambda)}{\eta_{*}}>0,\] then
\begin{gather} \label{concavity}
\begin{split}
& -\sum_{p \neq q} \frac{\sigma_{k}^{pp,qq} \xi_p\xi_q}{\lambda_1} - \gamma \frac{\sigma_{k}^{11} \xi_{1}^2}{\lambda_{1}^2}+2\sum_{i>m} \frac{\sigma_{k}^{ii}\xi_{i}^2}{\lambda_1(\lambda_1-\lambda_i)}\\
\geq &\ - \frac{2}{\lambda_1 \sigma_k} \left(\sum_{i=1}^{n} \sigma_{k}^{ii}\xi_{i}\right)^2
\end{split}
\end{gather} for any $\xi \in \bR^n$ with
\[\xi_2 = \cdots = \xi_m = 0.\] If $m=1$, then the inequality \eqref{concavity} holds for all $\xi \in \bR^n$.
\end{lemma}
\begin{proof}
See \cite[Theorem 1.1 and Corollary 2.3]{Yan}.
\end{proof}
\section{A Pogorelov type interior estimate} \label{the theorem proof}

In this section, we prove the following Pogorelov type interior estimate that is independent of $\sup_{\Omega}|Du|$, from which Theorem \ref{the theorem} follows routinely.

\begin{lemma}
Let $\Omega \subseteq \bR^n$ be a smooth bounded domain. Suppose $u \in C^{\infty}(\Omega) \cap C^2(\overline{\Omega})$ is a $k$-admissible solution of 
\[\begin{alignedat}{2}
\sigma_{k}\bigl(\lambda(D^2u)\bigr)&=1 &\quad & \text{in $\Omega$},\\
u&=0 &\quad & \text{on $\partial \Omega$}.\\
\end{alignedat}\] Then there exist constants $\beta, C>0$ whose values depend only on $n$, $k$, and $\diam(\Omega)$ such that
\[\sup_{\Omega}\ (-u)^\beta |D^2u| \leq C.\]
\end{lemma}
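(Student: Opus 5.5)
We consider the auxiliary function
\[
\phi = \beta\log(-u) + \log \lambda_{\max}(D^2u) + \frac{a}{2}|x|^2
\]
on $\{u<0\}$, where $\beta$ and $a$ are constants to be fixed. Since $u=0$ on $\partial\Omega$ and $\lambda_{\max}>0$ in $\Gamma_k$, $\phi$ attains its maximum at an interior point $x_0$.

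The reason for the choice: no gradient term \eqref{the gradient term} is used. Instead, the quadratic term $\frac a2|x|^2$ supplies the good term $a\sum_i \sigma_k^{ii}$. The bad gradient contributions coming from $\log(-u)$ are controlled through the critical point equation together with the concavity inequality of Lemma \ref{the concavity inequality}.

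\textbf{Setup at $x_0$.} Rotate coordinates so that $D^2u(x_0)$ is diagonal with $\lambda_1=\dots=\lambda_m>\lambda_{m+1}\ge\dots\ge\lambda_n$. To handle multiplicity, replace $\lambda_{\max}$ by a smooth perturbation (say $u_{11}$ plus a small quadratic perturbation, or the standard perturbation argument), so that the first and second derivatives of $\lambda_1$ obey the standard formulas:
\[
\lambda_{1,i}=u_{11i},\qquad \lambda_{1,ii}\ge u_{11ii}+2\sum_{p>m}\frac{u_{1pi}^2}{\lambda_1-\lambda_p},
\]
with $u_{1pi}=0$ for $2\le p\le m$ in the appropriate sense. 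These are the constraints $\xi_2=\dots=\xi_m=0$ needed in \eqref{concavity}.

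The critical point condition reads
\[
\frac{\beta u_i}{u}+\frac{u_{11i}}{\lambda_1}+a x_i=0 .
\]
Next differentiate the equation twice in $x_1$:
\[
\sum\sigma_k^{ii}u_{ii11}=-\sum\sigma_k^{pq,rs}u_{pq1}u_{rs1}.
\]
Here the diagonal part is $-\sum_{p\ne q}\sigma_k^{pp,qq}u_{pp1}u_{qq1}$, and the off-diagonal part gives the good term $+2\sum_{p\neq q}\sigma_k^{pp,qq}u_{pq1}^2$, which in particular contains $2\sum_{i>m}\sigma_k^{ii}u_{11i}^2/(\lambda_1-\lambda_i)$ via \eqref{2nd derivative for sigma k 2}.

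\textbf{The inequality at the maximum.} From $0\ge \sum\sigma_k^{ii}\phi_{ii}$ we get
\[
0\ \ge\ \frac{\beta k}{u}-\beta\sum\frac{\sigma_k^{ii}u_i^2}{u^2}+\frac{1}{\lambda_1}\Big(-\sum_{p\ne q}\sigma_k^{pp,qq}u_{pp1}u_{qq1}+2\sum_{i>m}\frac{\sigma_k^{ii}u_{1ii}^2}{\lambda_1-\lambda_i}+\cdots\Big)-\sum\frac{\sigma_k^{ii}u_{11i}^2}{\lambda_1^2}+a\sum\sigma_k^{ii}.
\]
The term $\sum_i \sigma_k^{ii}u_{ii1}=0$ by differentiating the equation once, so the right-hand side of \eqref{concavity} with $\xi_p=u_{pp1}$ vanishes. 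Lemma \ref{the concavity inequality} then applies (after assuming $\lambda_1\ge 1/\eta_*$, otherwise we are done). It bounds the third-derivative terms in the $i=1$ direction from below by $-(1-\gamma)\sigma_k^{11}u_{111}^2/\lambda_1^2$, hence by a net gain of $(\gamma-1)$. Note $\gamma>1$ is allowed by \eqref{gamma} exactly because $2k>n$. This is where the hypothesis enters.

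For $i\ge2$, we use the remaining good terms $2\sigma_k^{1i}$-type terms,
\[
\frac{2\sigma_k^{ii}u_{11i}^2}{\lambda_1(\lambda_1-\lambda_i)}\ \ge\ \frac{\sigma_k^{ii}u_{11i}^2}{\lambda_1^2}\quad\text{when }\lambda_i\ge -\lambda_1,
\]
which holds by Lemma \ref{sigma_k properties 2}, since $\frac{n-k}{k}<1$. These absorb the $-\sigma_k^{ii}u_{11i}^2/\lambda_1^2$ terms, for instance using the off-diagonal term $\sigma_k^{11,ii}u_{1i1}^2$ counted twice.

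For the $i=1$ direction we substitute the critical equation $u_{111}/\lambda_1=-\beta u_1/u-ax_1$. Then $(\gamma-1)\sigma_k^{11}(\beta u_1/u+ax_1)^2$ dominates $\beta\sigma_k^{11}u_1^2/u^2$ provided $(\gamma-1)\beta^2>\beta$, i.e. $\beta>1/(\gamma-1)$, up to an $a^2|x|^2\sigma_k^{11}$ error. For $i\ge2$ the terms $\beta\sigma_k^{ii}u_i^2/u^2$ are handled by the standard case split. If $\lambda_i$ is comparable to $\lambda_1$ we use the critical equation in the $i$ direction. Otherwise we use $\sigma_k^{ii}\ge c\sum\sigma_k^{jj}$ for the small eigenvalues and $\sum\sigma_k^{ii}\ge c\,\lambda_1\sigma_k^{11}$-type bounds together with Lemma \ref{NM}, choosing $a$ large relative to $\beta$.

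\textbf{Conclusion and main obstacle.} We obtain
\[
a\sum\sigma_k^{ii}\le \frac{C\beta}{-u}+\frac{C\beta}{u^2}\sum_{i\ge2}\sigma_k^{ii}u_i^2 ,
\]
and, using $\sum\sigma_k^{ii}\ge c\lambda_1^{1/(k-1)}$ and $\sigma_k^{11}\lambda_1\ge c$, this yields $(-u)^{\beta}\lambda_1\le C$ at $x_0$, hence everywhere. The main obstacle is precisely the gradient terms $\beta\sigma_k^{ii}u_i^2/u^2$ for $i\ge2$ without a bound on $|Du|$. We intend to absorb them via the critical equation combined with the good $\frac{2\sigma_k^{ii}u_{11i}^2}{\lambda_1(\lambda_1-\lambda_i)}$ term, keeping a $\delta$-fraction of it in reserve, rather than using \eqref{the gradient term}. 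The choice of $\beta$ large, depending on $\gamma-1>0$, is what makes this balance possible.
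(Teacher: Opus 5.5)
\textbf{Verdict: genuine gap.} Your setup is the paper's: its $Q$ is $e^{\phi}$ with $a=1$. Your handling of the $i=1$ gradient term through the net gain $\gamma-1>0$ from Lemma \ref{the concavity inequality} is also equivalent to what the paper does. The proof does not close, however, at exactly the point you flag. The inequality you end with still contains $\frac{C\beta}{u^2}\sum_{i\ge2}\sigma_k^{ii}u_i^2$. Since no bound on $|Du|$ may be used, $(-u)^\beta\lambda_1\le C$ does not follow from it.

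The ``standard case split'' cannot repair this, for three reasons. First, in the branch where $\lambda_i$ is not comparable to $\lambda_1$, absorbing $\beta\sigma_k^{ii}u_i^2/u^2$ into $a\sum_j\sigma_k^{jj}$ requires bounding $u_i^2/u^2$, i.e.\ a gradient bound, which is precisely what this lemma must avoid. Second, taking $a$ large relative to $\beta$ goes the wrong way: the $ax_i$ part of the critical equation produces an error of size $\frac{a^2\diam(\Omega)^2}{\beta}\sum_i\sigma_k^{ii}$, so one needs $\beta\gg a\diam(\Omega)^2$. Third, you have already spent the good term, since $\frac{2}{\lambda_1(\lambda_1-\lambda_i)}\ge\frac{1}{\lambda_1^2}$ for $\lambda_i\ge-\lambda_1$ is sharp and leaves nothing in reserve.

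The missing step is to use the critical-point identity $\beta u_i/u=-(u_{11i}/\lambda_1+ax_i)$ in \emph{every} direction:
\[
\beta\,\frac{\sigma_k^{ii}u_i^2}{u^2}=\frac{1}{\beta}\,\sigma_k^{ii}\Bigl(\frac{u_{11i}}{\lambda_1}+ax_i\Bigr)^2\le\frac{2}{\beta}\,\frac{\sigma_k^{ii}u_{11i}^2}{\lambda_1^2}+\frac{2a^2\diam(\Omega)^2}{\beta}\,\sigma_k^{ii}.
\]
With $\varepsilon\sim 1/\beta$, the gradient terms then only raise the coefficient of the bad terms $\sigma_k^{ii}u_{11i}^2/\lambda_1^2$ from $1$ to $1+\varepsilon$. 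They also add an error $\varepsilon\sum_i\sigma_k^{ii}$, which $a\sum_i\sigma_k^{ii}$ absorbs. Each direction then needs its own reserve:
\begin{itemize}
\item For $2\le i\le m$ nothing more is needed, since $u_{11i}=0$.
\item For $i=1$ the reserve is $\gamma-1$.
\item For $i>m$ the reserve comes from the strict bound $\lambda_i>-\frac{n-k}{k}\lambda_1$ of Lemma \ref{sigma_k properties 2}. It gives $\frac{2\lambda_1}{\lambda_1-\lambda_i}\ge\frac{2k}{n}>1+\varepsilon$.
\end{itemize}
The $i>m$ case is a second, independent use of $2k>n$, and its reserve is $\frac{2k-n}{n}$, not $\gamma-1$. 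After this all gradient terms are gone, and $0\ge\frac{\beta k}{u}+c\sum_i\sigma_k^{ii}$ finishes the proof as you describe.

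There is also a smaller bookkeeping slip. The off-diagonal term is $\sum_{p\ne q}\sigma_k^{pp,qq}u_{pq1}^2$ over ordered pairs, with coefficient $1$, not $2$. Its $(1,i),(i,1)$ part is $2\frac{\sigma_k^{ii}-\sigma_k^{11}}{\lambda_1-\lambda_i}u_{11i}^2$. The missing $\sigma_k^{11}$ piece comes from the $i=1$ term of $2\sum_{p>m}u_{1pi}^2/(\lambda_1-\lambda_p)$ in the second-order inequality for $\lambda_{\max}$.
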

\begin{proof}
By translation if necessary, we may assume $0 \in \Omega$.
Consider the auxiliary function
\[Q=(-u)^{\beta}\lambda_{\max}(D^2u)\exp\left(\frac{1}{2}|x|^2\right),\] where $\lambda_{\max}(D^2u)$ is the largest eigenvalue of $D^2u$ and $\beta>0$ is a constant whose value is to be determined later.

By the inclusion relation \eqref{inclusion relation}, we see that $\Delta u > 0$ and so $u<0$ in $\Omega$. On the other hand, there exist \cite[Proposition 4.1]{Chu-Dinew} constants $c_1,c_2>0$ whose values depend only on $n$, $k$, and $\diam(\Omega)$ such that the function
\[\underline{u}:=\frac{c_1}{2}|x|^2-c_2\] satisfies
\[\begin{alignedat}{2}
\sigma_{k}\bigl(\lambda(D^2\underline{u})\bigr)&=1 &\quad & \text{in $\Omega$},\\
\underline{u}&\leq 0 &\quad & \text{on $\partial \Omega$}.\\
\end{alignedat}\] It follows that
\[\underline{u} \leq u < 0 \quad \text{in $\Omega$}.\] Therefore, we may allow dependence on $\sup_{\Omega} |u|$ for constants in the subsequent computations. 

Suppose $Q$ attains its maximum at some interior $x_0 \in \Omega$. We may choose suitable coordinates around $x_0$ such that the Hessian $D^2u(x_0)=\lambda_i(x_0)\delta_{ij}$ is diagonal and
\[\lambda_{\max}(x_0)=\lambda_1(x_0) \geq \cdots \geq \lambda_n(x_0).\] Let $C>0$ denote a positive constant whose value depends only on $n$, $k$, $\diam(\Omega)$ and $\sup_{\Omega} |u|$. The value of $C$ may change from line to line but will still be denoted by the same symbol; this would not cause confusion because the exact magnitude would be irrelevant in our analysis but the dependence on the known data. In our derivation below, given any such a constant $C>0$, we may always assume $\lambda_1(x_0) \geq C$ is sufficiently large, otherwise the desired bound would follow immediately. Note that we would impose this assumption without explicitly saying so.

In case $\lambda_1$ has multiplicity $m \geq 2$ in the sense that
\[\lambda_1(x_0)=\cdots=\lambda_m(x_0)>\lambda_{m+1}(x_0)\geq \cdots \geq \lambda_n(x_0),\] we consider the function
\[\varphi(x):=Q(x_0)(-u)^{-\beta}\exp\left(-\frac{1}{2}|x|^2\right).\] Due to this construction, $\varphi$ is a smooth function such that $\varphi(x) \geq \lambda_1(x)$ and $\varphi(x_0)=\lambda_1(x_0)$. Then, at the point $x_0$, we have \cite[Lemma 5]{BCD} that
\begin{align}
u_{kli}&=\varphi_i \delta_{kl}, \quad \text{for $1 \leq k,l \leq m$}, \label{1st order approximation} \\
\varphi_{ii}&\geq u_{11ii}+2\sum_{p>m} \frac{u_{1pi}^2}{\lambda_1-\lambda_p}. \label{2nd order approximation}
\end{align} 
In what follows, we will stop writing out the point $x_0$ of evaluation for convenience, but keep in mind that all the computations below are carried out at $x_0$.

Now, the function
\[\tilde{Q}(x):=(-u)^{\beta} \varphi(x) \exp \left(\frac{1}{2}|x|^2\right)=Q(x_0)\] has constant value at $x_0$. Taking \eqref{1st order approximation} and \eqref{2nd order approximation} into account, it follows that
\begin{align}
0&=\beta \frac{u_i}{u}+\frac{\varphi_i}{\varphi}+x_i \nonumber \\
&=\beta \frac{u_i}{u} + \frac{u_{11i}}{\lambda_1}+x_i \label{1st critical 1}
\end{align} and
\begin{align}
0&=\beta \frac{u_{ii}}{u}-\beta \frac{u_{i}^2}{u^2} + \frac{\varphi_{ii}}{\varphi}-\frac{\varphi_{i}^2}{\varphi^2}+1 \nonumber \\
&\geq \beta \frac{u_{ii}}{u}-\beta \frac{u_{i}^2}{u^2} + \frac{u_{11ii}}{\lambda_1}+2\sum_{p>m} \frac{u_{1pi}^2}{\lambda_1(\lambda_1-\lambda_p)}-\frac{u_{11i}^2}{\lambda_{1}^2}+1. \label{2nd critical 1}
\end{align}

\begin{remark}
When $m=1$, we could immediately obtain \eqref{1st critical 1} and \eqref{2nd critical 1} at $x_0$. The subsequent computations would work fine for both $m=1$ and $m \geq 2$.
\end{remark}

Contracting \eqref{2nd critical 1} with $F^{ii}=\sigma_{k}^{ii}$, we have
\begin{gather} \label{2nd critical 2}
\begin{split}
0 \geq &\ \frac{\beta k}{u} - \beta \sum_{i=1}^{n} \frac{F^{ii}u_{i}^2}{u^2}+\sum_{i=1}^{n} \frac{F^{ii}u_{11ii}}{\lambda_1}+2\sum_{i=1}^{n}\sum_{p>m} \frac{F^{ii}u_{1pi}^2}{\lambda_1(\lambda_1-\lambda_p)}\\
& -\sum_{i=1}^{n} \frac{F^{ii}u_{11i}^2}{\lambda_{1}^2}+\sum_{i=1}^{n} F^{ii},
\end{split}
\end{gather} where we have used Lemma \ref{sigma_k properties 1} to get
\[\sum_{i=1}^{n} F^{ii}u_{ii}=\sum_{i=1}^{n} \sigma_{k}^{ii}\lambda_i=k\sigma_k=k.\] Differentiating the equation $F(D^2u)=1$ twice, we obtain that
\begin{equation}
\sum_{i=1}^{n} F^{ii} u_{ii1}=0 \label{differentiate once}
\end{equation} and
\begin{equation}
\sum_{i,j,k,l} F^{ij,kl}u_{ij1}u_{kl1}+\sum_{i=1}^{n} F^{ii}u_{11ii}=0. \label{differentiate twice}
\end{equation} It follows from \eqref{differentiate twice} and \eqref{2nd derivatives for sigma k} that
\begin{align*}
\sum_{i=1}^{n} F^{ii}u_{11ii} &=-\sum_{i,j,k,l} F^{ij,kl}u_{ij1}u_{kl1} \\
&=-\sum_{p \neq q} F^{pp,qq} u_{pp1}u_{qq1} + \sum_{p \neq q} F^{pp,qq} u_{pq1}^2.
\end{align*} 
By \eqref{1st order approximation} and \eqref{2nd derivative for sigma k 2}, we may expand further that
\begin{gather} \label{expand 1}
\begin{split}
\sum_{p \neq q} \frac{F^{pp,qq}u_{pq1}^2}{\lambda_1} &\geq 2\sum_{i>m} \frac{F^{11,ii}u_{11i}^2}{\lambda_{1}}=2\sum_{i>m} \frac{F^{ii} - F^{11}}{\lambda_1(\lambda_1-\lambda_i)} u_{11i}^2.
\end{split}
\end{gather} Similarly, we have that
\begin{gather} \label{expand 2}
\begin{split}
2\sum_{i=1}^{n}\sum_{p>m} \frac{F^{ii}u_{1pi}^2}{\lambda_1(\lambda_1-\lambda_p)}& \geq 2\sum_{p>m} \frac{F^{pp}u_{1pp}^2}{\lambda_1(\lambda_1-\lambda_p)}+2\sum_{p>m}\frac{F^{11}u_{1p1}^2}{\lambda_1(\lambda_1-\lambda_p)}\\
&=2\sum_{i>m} \frac{F^{ii}u_{ii1}^2}{\lambda_1(\lambda_1-\lambda_i)} +2 \sum_{i>m} \frac{F^{11}u_{11i}^2}{\lambda_1(\lambda_1-\lambda_i)}.
\end{split}
\end{gather} Again, by \eqref{1st order approximation}, we have
\[u_{11i}=u_{1i1}=\delta_{1i} \cdot \varphi_{1}=0 \quad \text{for $1<i \leq m$}.\] Hence, we have
\begin{equation}
\sum_{i=1}^{n} \frac{F^{ii}u_{11i}^2}{\lambda_{1}^2}=\frac{F^{11}u_{111}^2}{\lambda_{1}^2}+\sum_{i>m} \frac{F^{ii}u_{11i}^2}{\lambda_{1}^2} \label{expand 3}
\end{equation} which is a trivial equality if $m=1$.

Substituting \eqref{expand 1}, \eqref{expand 2}, and \eqref{expand 3} back into \eqref{2nd critical 2}, we get
\begin{gather} \label{2nd critical 3}
\begin{split}
0\geq &\ -\sum_{p \neq q} \frac{F^{pp,qq} u_{pp1}u_{qq1}}{\lambda_1}-\frac{F^{11}u_{111}^2}{\lambda_{1}^2}+2\sum_{i>m} \frac{F^{ii}u_{ii1}^2}{\lambda_1(\lambda_1-\lambda_i)} \\
&\ +2\sum_{i>m} \frac{F^{ii} - F^{11}}{\lambda_1(\lambda_1-\lambda_i)} u_{11i}^2+2 \sum_{i>m} \frac{F^{11}u_{11i}^2}{\lambda_1(\lambda_1-\lambda_i)}-\sum_{i>m} \frac{F^{ii}u_{11i}^2}{\lambda_{1}^2}\\
&\ +\frac{\beta k}{u} - \beta \sum_{i=1}^{n} \frac{F^{ii}u_{i}^2}{u^2}+\sum_{i=1}^{n} F^{ii}.
\end{split}
\end{gather} Next, by the first order critical condition \eqref{1st critical 1}, we have
\begin{align*}
\beta \sum_{i=1}^{n} \frac{F^{ii}u_{i}^2}{u^2}&=\frac{1}{\beta}\sum_{i=1}^{n} F^{ii}\left(\frac{u_{11i}}{\lambda_1}+x_i\right)^2 \\
&\leq \frac{C}{\beta} \sum_{i=1}^{n} \frac{F^{ii}u_{11i}^2}{\lambda_{1}^2}+\frac{C}{\beta} \sum_{i=1}^{n} F^{ii}
\end{align*} for some $C>0$ depending on $\diam(\Omega)$. For notational convenience, we may set
\[\varepsilon:=\frac{C}{\beta}.\] By taking $\beta>0$ large, we will have $\varepsilon$ small. Thus, \eqref{2nd critical 3} becomes
\begin{gather} \label{2nd critical 4}
\begin{split}
0\geq &\ -\sum_{p \neq q} \frac{F^{pp,qq} u_{pp1}u_{qq1}}{\lambda_1}-(1+\varepsilon)\frac{F^{11}u_{111}^2}{\lambda_{1}^2}+2\sum_{i>m} \frac{F^{ii}u_{ii1}^2}{\lambda_1(\lambda_1-\lambda_i)} \\
&\ +2\sum_{i>m} \frac{F^{ii}}{\lambda_1(\lambda_1-\lambda_i)} u_{11i}^2-(1+\varepsilon)\sum_{i>m} \frac{F^{ii}u_{11i}^2}{\lambda_{1}^2}\\
&\ +\frac{\beta k}{u}+(1-\varepsilon)\sum_{i=1}^{n} F^{ii}.
\end{split}
\end{gather} We proceed to deal with \eqref{2nd critical 4} line by line.

For the first line in \eqref{2nd critical 4}, since 
\[u_{ii1}=u_{i1i}=\delta_{i1} \cdot \varphi_{i}=0, \quad \text{for $1<i \leq m$},  \quad \text{due to \eqref{1st order approximation}},\] we may apply Lemma \ref{the concavity inequality} to $\xi_i=u_{ii1}$ and obtain that
\begin{align*}
&\ -\sum_{p \neq q} \frac{F^{pp,qq} u_{pp1}u_{qq1}}{\lambda_1}-(1+\varepsilon)\frac{F^{11}u_{111}^2}{\lambda_{1}^2}+2\sum_{i>m} \frac{F^{ii}u_{ii1}^2}{\lambda_1(\lambda_1-\lambda_i)}\\
\geq &\ \left[\gamma-(1+\varepsilon)\right]\frac{F^{11}u_{111}^2}{\lambda_{1}^2} - \frac{2}{F\lambda_1}\left(\sum_{i=1}^{n} F^{ii}u_{ii1}\right)^2.
\end{align*}
Assume $2k>n$. Then according to \eqref{gamma}, the number $\gamma$ can be taken as 
\[\gamma=1+\delta\] for some $\delta=\delta(n)>0$ depending only on $n$. Therefore, by taking a small enough $\varepsilon$ and invoking \eqref{differentiate once}, we can make
\begin{align*}
-\sum_{p \neq q} \frac{F^{pp,qq} u_{pp1}u_{qq1}}{\lambda_1}-(1+\varepsilon)\frac{F^{11}u_{111}^2}{\lambda_{1}^2}+2\sum_{i>m} \frac{F^{ii}u_{ii1}^2}{\lambda_1(\lambda_1-\lambda_i)} \geq 0
\end{align*}
\begin{remark}
When $m=1$, this non-negativity holds more directly by applying Lemma \ref{the concavity inequality}.
\end{remark}
For the second line in \eqref{2nd critical 4}, we compute directly,
\begin{align*}
&\ 2\sum_{i>m} \frac{F^{ii}}{\lambda_1(\lambda_1-\lambda_i)} u_{11i}^2-(1+\varepsilon)\sum_{i>m} \frac{F^{ii}u_{11i}^2}{\lambda_{1}^2} \\
=&\ \sum_{i>m} \frac{2\lambda_1-(1+\varepsilon)(\lambda_1-\lambda_i)}{\lambda_1-\lambda_i}\frac{F^{ii}u_{11i}^2}{\lambda_{1}^2}\\
=&\ \sum_{i>m} \frac{(1-\varepsilon)\lambda_1+(1+\varepsilon)\lambda_i}{\lambda_1-\lambda_i} \frac{F^{ii}u_{11i}^2}{\lambda_{1}^2}.\\
\end{align*} Assume $2k>n$. By Lemma \ref{sigma_k properties 2}, we can make the coefficient
\[\frac{(1-\varepsilon)\lambda_1+(1+\varepsilon)\lambda_i}{\lambda_1-\lambda_i} \geq \frac{(2k-n)-n\varepsilon}{n}\] non-negative by taking $\varepsilon=\varepsilon(n,k)>0$ small enough. Thus, the second line has been eliminated from \eqref{2nd critical 4}.

Now, only the third line remains in \eqref{2nd critical 4}. That is,
\begin{equation}
0 \geq \frac{\beta k}{u} + (1-\varepsilon)\sum_{i=1}^{n} F^{ii}. \label{2nd critical 5}
\end{equation}

Finally, by taking $\varepsilon$ small, we may have a term of 
\[(1-\varepsilon)\sum_{i=1}^{n} F^{ii} \geq \frac{1}{2}\sum_{i=1}^{n} F^{ii}.\] By Lemma \ref{sigma_k properties 1} and Lemma \ref{NM}, we have
\[\sum_{i=1}^{n} F^{ii}=(n-k+1)\sigma_{k-1} \geq C(n,k) \sigma_{1}^{\frac{1}{k-1}} \sigma_{k}^{\frac{k-2}{k-1}} \geq C(n,k) \lambda_{1}^{\frac{1}{k-1}}.\] Therefore, \eqref{2nd critical 5} implies that
\[0 \geq -\frac{\beta k}{(-u)}+C(n,k)\lambda_{1}^{\frac{1}{k-1}}\] and the desired estimate readily follows.

The proof is now complete.
\end{proof}
\bibliography{refs}
\end{document}